\documentclass[final,3p,times]{elsarticle}

\usepackage{amsthm,amsmath,amssymb,amsfonts,dsfont}
\usepackage{graphicx,graphics,color}
\usepackage{subeqnarray}
\usepackage{cases}
\newtheorem{theorem}{Theorem}[section]

\journal{}

\begin{document}

\begin{frontmatter}



\title{Global Attractivity of a Nonlocal Delayed Diffusive Dengue Model in a Spatially Homogeneous Environment}
\author[HLJU]{Xue Ren}
\ead{xueren@hlju.edu.cn}

\author[HLJU]{Ran Zhang\corref{mycorrespondingauthor}}
\cortext[mycorrespondingauthor]{Corresponding author}
\ead{ranzhang@hlju.edu.cn}

\address[HLJU]{\scriptsize Engineering Research Center of Agricultural Microbiology Technology, Ministry of Education; Heilongjiang Provincial Key Laboratory of Ecological Restoration and Resource Utilization for Cold Region; School of Mathematical Science, Heilongjiang University,\ Harbin 150080, P.R. China}

\begin{abstract}
In Xu and Zhao (2015), the global attractivity of positive constant steady state is established through the application of the fluctuation method, subject to the sufficient condition that the disease will stabilized at the unique spatially-homogeneous steady state if $\Re_0>1$ exceeds a certain threshold. The focus of this study is to eliminate the need for a sufficient condition by employing a suitable Lyapunov functional and prove that the positive constant steady state is globally attractive when $\Re_0$ is exactly greater than unity, which significantly enhancing the findings outlined in Theorem 3.3 of Xu and Zhao (2015).
\end{abstract}

\begin{keyword}
Global attractivity;
Reaction-diffusion model;
Dengue;
Nonlocal delay;
Lyapunov functional

\MSC[2020] 92D30 \sep 35K57 \sep 35B40
\end{keyword}

\end{frontmatter}


\section{Introduction}\label{sec:Inc}
\def\d {{\rm d}}
Dengue is a viral disease transmitted by mosquitoes, primarily the female \textit{Aedes aegypti} or \textit{Aedes albopictus}, which has quickly proliferated among humans through mosquito bites.
According to the WHO, the global incidence of dengue has markedly increased in recent decades. Approximately half of the world's population is now vulnerable to this disease, with an estimated 100-400 million infections occurring annually.
Dengue fever is prevalent throughout the tropics, the risk of dengue fever varies in some areas, which are affected by rainfall, temperature, relative humidity and unplanned rapid urbanization \cite{WTO2020}.
Recently,
Xu and Zhao \cite{XuZhaoAMC2015} studied a dengue model with nonlocal delayed reaction in both heterogeneous and homogeneous environments.
In the heterogeneous scenario, the authors present findings on disease extinction and persistence linked to its basic reproduction number.
For the spatially homogeneous case, a series of sufficient conditions is provided to demonstrate the global attractiveness of the positive steady state within a bounded domain. Additionally, the study in \cite{XuZhaoAMC2015} explores the traveling wave problem in an unbounded domain. The spatially homogeneous model proposed in \cite{XuZhaoAMC2015} reads as:
\begin{equation}
\label{Model}\left\{
\begin{array}{ll}
\vspace{2mm}
\displaystyle   \frac{\partial u_1}{\partial t} = d_m \Delta u_1 + \beta_m(A-u_1)\int_\Omega \Gamma(d_m\tau_a,x,y)u_3(t-\tau_a,y)\d y - \mu_m u_1,\\
\vspace{2mm}
\displaystyle   \frac{\partial u_2}{\partial t} = d_h \Delta u_2 + H - \beta_h u_1 u_2 - \mu_h u_2,\\
\displaystyle   \frac{\partial u_3}{\partial t} = d_h \Delta u_3 + \beta_h \mathrm{e}^{-\mu_h \tau_b} \int_\Omega \Gamma(d_h\tau_b,x,y)u_1(t-\tau_b,y)u_2(t-\tau_b,y)\d y - \rho_h u_3,
\end{array}\right.
\end{equation}
with initial condition and Neumann boundary conditions:
\[\left\{
\begin{array}{ll}
\vspace{2mm}
\displaystyle   u_i(s,x) = \phi_i(0,x) \geq 0,\ \ s\in[-\max\{\tau_a,\tau_b\},0],\ \ x\in\Omega,\\
\vspace{0mm}
\displaystyle   \frac{\partial u_i}{\partial \textbf{n}} = 0,\ \ t >0,\ \ x \in \partial \Omega,
\end{array}\right.
\]
for $i=1,2,3$, where $u_i: = u_i(t,x)$ denotes the densities of infectious mosquitos, susceptible and infectious humans, respectively. $\beta_m: = bp$, $\beta_h: = bq$ and $\rho_h: = \mu_h+\gamma_h$. Constants $d_m$ and $d_h$ are the diffusion rate of mosquitos and human, respectively. $A$ and $H$ are the rates at which new individuals (mosquitoes or humans) are added to the population.
$b$ is the average rate at which mosquitoes bite hosts,
$p\ (q)$ is the successful infection rate though a bite from a susceptible (or infectious) mosquito to an infectious (or susceptible).
$\gamma_h$ represents the rate at which infected individuals recover from the infection. The removal rate of each population are assumed to be $\mu_m$ and $\mu_h$. Delays $\tau_a$ and $\tau_b$ are the incubation periods.
{$\Gamma(t,x,y)$ denotes the Green function associated with $\frac{\partial u}{\partial t} = \Delta u$ subject to the Neumann boundary condition.}
All coefficients are assumed to be positive. We first give some notions of spaces, which are the same in \cite{XuZhaoAMC2015}. Let $\mathds{X}:=\textrm{BUC}(\overline{\Omega},\mathds{R}^3)$ be the collection of all functions that are both bounded and uniformly continuous from the closure of ${\Omega}$ to $\mathds{R}^3$ and $\mathds{C} = C([ -\max\{\tau_a,\tau_b\},0],\mathds{X})$.
Denote
\[
C_\mathbf{M} = \left\{\phi \in \mathds{C}:\ \textbf{0}\leq\phi(t,x)\leq \textbf{M},\ \forall x\in \overline{\Omega},\ \forall t \in[ -\max\{\tau_a,\tau_b\},0]\right\}
\]
with
\[
\textbf{M}:= \left(A,\ \frac{H}{\mu_h},\ \frac{AH\beta_h}{\mu_h \rho_h}\mathrm{e}^{-\mu_h \tau_b}\right).
\]
According to Theorem 3.1 of \cite{XuZhaoAMC2015}, system (\ref{Model}) admits a unique mild solution $u(t,\cdot,\phi)\in C_\textbf{M}$ for all $t \geq 0$ with $u(0,\cdot,\phi) = \phi$. Moreover, the solution semi-flow defined by $\Phi(t):=u_t(\cdot):C_\textbf{M}\rightarrow C_\textbf{M}$ has a global compact attractor.

It has been proved in \cite{XuZhaoAMC2015} that the positive steady state of system (\ref{Model}) is globally attractive, which is achieved by using the fluctuation method \cite{ThiemeZhaoNARWA2001}. However, the global attractivity of the positive steady state requires the basic reproduction number $\Re_0$ of (\ref{Model}) satisfying the additional condition. The corresponding result in \cite{XuZhaoAMC2015} reads as:

\begin{theorem}\label{orginal-thm}{\cite[Theorem 3(iii)]{XuZhaoAMC2015}}
Let $\Re_0 = \sqrt{\frac{\beta_h\beta_mAH\mathrm{e}^{-\mu_h\tau_b}}{\mu_h\mu_m\rho_h}}$. If 
\begin{equation}\label{condition}
\Re_0>\max\left\{1,\sqrt{\frac{A\beta_h}{\mu_h}}\right\},
\end{equation} 
then the system (\ref{Model}) admits a unique constant steady
state $u^* = (u_1^*,u_2^*,u_3^*)^\mathrm{T}$ such that 
\[
\lim_{t\rightarrow\infty} u(t,x,\phi) = u^*
\]
uniformly for $x\in \overline{\Omega}$, provided that $\phi\in C_\mathbf{M}$ with $\phi_1(0,\cdot)\not\equiv 0$ or $\phi_3(0,\cdot)\not\equiv 0$.
\end{theorem}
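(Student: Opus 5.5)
The plan is to prove the statement with a Lyapunov functional combined with the LaSalle invariance principle on the global compact attractor of $\Phi(t)$ in $C_\mathbf{M}$. This avoids the fluctuation method, and I expect it to use only $\Re_0>1$, so the extra requirement $\Re_0>\sqrt{A\beta_h/\mu_h}$ in (\ref{condition}) should not be needed.

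\textbf{Step 1: the constant steady state.} Constant steady states satisfy
\[
u_2^*=\frac{H}{\mu_h+\beta_h u_1^*},\qquad
u_3^*=\frac{\beta_h \mathrm{e}^{-\mu_h\tau_b}u_1^*u_2^*}{\rho_h},\qquad
\mu_m u_1^*=\beta_m(A-u_1^*)u_3^*,
\]
since $\int_\Omega\Gamma(t,x,y)\,\d y=1$. Eliminating $u_2^*,u_3^*$ reduces this to a linear equation for $u_1^*>0$, namely $\mu_m\rho_h(\mu_h+\beta_h u_1^*)=\beta_m\beta_h H\mathrm{e}^{-\mu_h\tau_b}(A-u_1^*)$. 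Its unique root lies in $(0,A)$ if and only if $\Re_0>1$. This gives existence and uniqueness of $u^*$.

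\textbf{Step 2: positivity and persistence.} Assume $\phi_1(0,\cdot)\not\equiv0$ or $\phi_3(0,\cdot)\not\equiv0$. The strong maximum principle, applied through the delayed coupling, gives $u_i(t,x)>0$ for all $t>\tau_a+\tau_b$. Also $u_1<A$ there, because $\partial_t u_1\le -\mu_m u_1$ near $u_1=A$. I then invoke the uniform persistence result of \cite{XuZhaoAMC2015}, which holds for $\Re_0>1$: there is $\eta>0$ with $\liminf_{t\to\infty}u_i\ge\eta$. Hence the relevant part of the attractor lies in a set where $\eta\le u_i$ and $u_1\le A-\eta$. On that set logarithms and quotients are bounded, so the functional below is well defined and continuous.

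\textbf{Step 3: the Lyapunov functional.} Let $g(z)=z-1-\ln z\ge0$. For $u_2,u_3$ I use $u_2^*g(u_2/u_2^*)$ and $u_3^*g(u_3/u_3^*)$. For $u_1$ I use
\[
G(u_1)=\int_{u_1^*}^{u_1}\Bigl(1-\frac{u_1^*(A-s)}{s(A-u_1^*)}\Bigr)\d s,
\]
which is convex, vanishes at $u_1^*$, and is tailored to the factor $(A-u_1)$. The delays and Green kernels are handled by double-integral terms such as
\[
\int_\Omega\int_\Omega\Gamma(d_h\tau_b,x,y)\int_{t-\tau_b}^{t}g\Bigl(\frac{u_1(s,y)u_2(s,y)}{u_1^*u_2^*}\Bigr)\d s\,\d y\,\d x,
\]
and the analogous term in $u_3$ with $\Gamma(d_m\tau_a,\cdot,\cdot)$. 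The total is
\[
V=\int_\Omega\Bigl[c_1G(u_1)+c_2u_2^*g\bigl(u_2/u_2^*\bigr)+c_3u_3^*g\bigl(u_3/u_3^*\bigr)\Bigr]\d x+(\text{delay terms}),
\]
with weights $c_i>0$ chosen so that the linear incidence terms cancel; I expect $c_3$ to be about $\mathrm{e}^{\mu_h\tau_b}$ relative to $c_2$. When differentiating:
\begin{itemize}
\item The diffusion terms give $-d\int_\Omega h''(u)|\nabla u|^2\,\d x\le0$ for each convex integrand $h$, by integration by parts and the Neumann condition.
\item The symmetry of $\Gamma$ together with $\int\Gamma\,\d x=\int\Gamma\,\d y=1$ lets the delay terms cancel the ``$t-\tau$'' contributions.
\item Jensen's inequality applied to the probability kernel $\Gamma(\cdot,x,\cdot)$ converts the logarithms of nonlocal averages into averages of logarithms.
\end{itemize}

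\textbf{Main obstacle.} The key difficulty is showing that the remaining reaction part is $\le0$. I would first use the equilibrium relations to rewrite it as $-\int(\cdots)$ of a sum of terms of the form $1-z+\ln z$ (each $\le0$), together with terms like $2-\frac{u_2^*}{u_2}-\frac{u_2}{u_2^*}\le0$ from the $H-\mu_hu_2$ part. The saturating factor $(A-u_1)$ produces an extra term of the form $-\frac{(u_1-u_1^*)^2}{(\ldots)}$-type. This is exactly where (\ref{condition}) entered in \cite{XuZhaoAMC2015}, so verifying its sign through the choice of $G$ is the hard computation. If that fails, I would instead use $g\bigl(\frac{u_1(A-u_1^*)}{u_1^*(A-u_1)}\bigr)$-type variants.

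\textbf{Step 4: conclusion.} Once $\dot V\le0$, the largest invariant subset of $\{\dot V=0\}$ consists of constants with $u_2=u_2^*$, and the equations then force $u=u^*$. LaSalle's principle on the compact attractor then gives $u(t,\cdot,\phi)\to u^*$ uniformly on $\overline\Omega$.
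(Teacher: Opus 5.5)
Your framework is the paper's: Volterra terms $g(u_i/u_i^*)$, Green-kernel delay terms whose $x$-integrals collapse because $\int_\Omega\Gamma(s,x,y)\,\mathrm{d}x=1$, Neumann integration by parts for the diffusion, persistence from Xu--Zhao to keep $V$ finite on the $\omega$-limit set, then LaSalle. The gap is that you stop exactly at the step that \emph{is} the proof. You never fix the weights and never show $\dot V\le0$. Instead you call the reaction part the ``main obstacle'', conjecture that the extra hypothesis $\Re_0>\sqrt{A\beta_h/\mu_h}$ enters there, and keep a fallback functional in reserve. As written, the proposal proves neither the stated theorem nor the $\Re_0>1$ version.

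The missing idea is elementary. Your $G$ is not tailored to $(A-u_1)$ at all: $G'(s)=\frac{A}{A-u_1^*}\bigl(1-\frac{u_1^*}{s}\bigr)$, so $G=\frac{Au_1^*}{A-u_1^*}\,g(u_1/u_1^*)$. Take the following weights:
\begin{itemize}
\item $c_1=\frac{\beta_hu_2^*(A-u_1^*)}{\mu_mA}$, so the $u_1$-term becomes $\frac{\beta_hu_1^*u_2^*}{\mu_m}g(u_1/u_1^*)$;
\item $c_2=1$ and $c_3=\mathrm{e}^{\mu_h\tau_b}$;
\item weight $K:=\beta_hu_1^*u_2^*$ on both delay terms.
\end{itemize}
This is exactly the paper's $V$.

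Now split $A-u_1=(A-u_1^*)-(u_1-u_1^*)$. The second piece contributes $-\frac{\beta_m\beta_hu_2^*}{\mu_mu_1}(u_1-u_1^*)^2\int_\Omega\Gamma(d_m\tau_a,x,y)u_3(t-\tau_a,y)\,\mathrm{d}y\le0$ with no condition. The first piece, via $\beta_m(A-u_1^*)u_3^*=\mu_mu_1^*$, is a plain linear incidence. Using also $H=\beta_hu_1^*u_2^*+\mu_hu_2^*$ and $\rho_hu_3^*=\beta_h\mathrm{e}^{-\mu_h\tau_b}u_1^*u_2^*$, all linear terms cancel. The constants and logarithms then recombine, using $\int_\Omega\Gamma\,\mathrm{d}y=1$, to give
\begin{align*}
\frac{\mathrm{d}V}{\mathrm{d}t}\le-\int_\Omega\Bigl[&\frac{\mu_h(u_2-u_2^*)^2}{u_2}+K\,g\Bigl(\frac{u_2^*}{u_2}\Bigr)\\
&+K\int_\Omega\Gamma(d_m\tau_a,x,y)\,g\Bigl(\frac{u_1^*\,u_3(t-\tau_a,y)}{u_1(t,x)\,u_3^*}\Bigr)\mathrm{d}y\\
&+K\int_\Omega\Gamma(d_h\tau_b,x,y)\,g\Bigl(\frac{u_1(t-\tau_b,y)\,u_2(t-\tau_b,y)\,u_3^*}{u_1^*u_2^*\,u_3(t,x)}\Bigr)\mathrm{d}y\Bigr]\mathrm{d}x\le0.
\end{align*}
Only $u^*>0$, i.e. $\Re_0>1$, is used. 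No Jensen inequality is needed, since no logarithm of a nonlocal average ever appears. Your bound $u_1\le A-\eta$ in Step 2 is also unnecessary.

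With this computation supplied, your Steps 2 and 4 go through as in the paper. The dropped quadratic $u_1$-term even gives $u_1\equiv u_1^*$ on the invariant set directly.
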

It is well known that $\Re_0$ is a critical threshold in epidemic modeling, dictates whether steady states are globally attractive. Theorem \ref{orginal-thm} prompts the question addressed in our paper: is the endemic steady state of model (\ref{Model}) globally attractive when $\Re_0$ is precisely greater than unity? This inquiry motivates our work in Section 2, where we will resolve this issue.

\section{Main results}
The objective of this paper is to investigate the global attractiveness of the positive steady state of system \eqref{Model} while relaxing the restrictive condition \eqref{condition}, which will notably enhance Theorem \ref{orginal-thm}. It is well-established that employing suitable Lyapunov functions (or functionals) is an effective approach to achieving global attractivity of endemic equilibrium in epidemic models governed by ordinary differential equations and other related frameworks, including functional differential equations and fractional order differential equations \cite{KorobeinikovWakeAML2002, HuangBMB2010, YangXuAML2020}.
Motivated by recent work \cite{LiZhaoJDE2021,ZhangWangJMB2022}, we will establish the following result.

\begin{theorem}
If $\Re_0>1$, then the system (\ref{Model}) admits a unique constant steady state $u^*$ such that
\[
\lim_{t\rightarrow\infty} u(t,x,\phi) = u^*
\]
uniformly for $x\in \overline{\Omega}$, provided that $\phi\in C_\mathbf{M}$ with $\phi_1(0,\cdot)\not\equiv 0$ or $\phi_3(0,\cdot)\not\equiv 0$..
\end{theorem}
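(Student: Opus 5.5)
We will combine a Lyapunov functional of Volterra type, integrated over $\Omega$, with the compactness of the semiflow and uniform persistence. This lets us deduce convergence on the global attractor and then for every admissible orbit.

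\textbf{Step 1 (equilibrium and persistence).} Solving the constant steady-state system gives
\[
u_2^*=\frac{H}{\beta_h u_1^*+\mu_h},\qquad u_3^*=\frac{\beta_h e^{-\mu_h\tau_b}u_1^*u_2^*}{\rho_h},\qquad \mu_m u_1^*=\beta_m(A-u_1^*)u_3^* .
\]
Eliminating $u_2^*$ and $u_3^*$ yields an equation in $u_1^*$ whose unique root in $(0,A)$ exists if and only if $\Re_0>1$. From \cite{XuZhaoAMC2015} we take uniform persistence: when $\Re_0>1$ and $\phi_1(0,\cdot)\not\equiv0$ or $\phi_3(0,\cdot)\not\equiv0$, there is $\eta>0$ with $\liminf u_i\ge\eta$ uniformly in $x$. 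This part of their argument does not use the extra condition \eqref{condition}. Persistence together with the compact attractor gives a compact invariant set $\mathcal{A}_0\subset C_\mathbf{M}$ on which every component is bounded below by $\eta$. The strong maximum principle makes all components strictly positive for $t>0$.

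\textbf{Step 2 (Lyapunov functional).} Set $g(z)=z-1-\ln z$. We will use $\int_\Omega\Gamma(t,x,y)\,\d x=1$, the symmetry of $\Gamma$, and Jensen's inequality for the convex $g$ to handle the nonlocal terms. For a full orbit in $\mathcal{A}_0$, define
\begin{align*}
V(t)=\int_\Omega\Big[&c_1u_1^*g\big(\tfrac{u_1}{u_1^*}\big)+u_2^*g\big(\tfrac{u_2}{u_2^*}\big)+c_3u_3^*g\big(\tfrac{u_3}{u_3^*}\big)\Big]\d x\\
&+c_1\beta_m(A-u_1^*)u_3^*\int_\Omega\int_{t-\tau_a}^t g\Big(\tfrac{u_3(s,x)}{u_3^*}\Big)\d s\,\d x\\
&+\beta_he^{-\mu_h\tau_b}u_1^*u_2^*\int_\Omega\int_{t-\tau_b}^t g\Big(\tfrac{u_1(s,x)u_2(s,x)}{u_1^*u_2^*}\Big)\d s\,\d x .
\end{align*}
The constant factor $e^{-\mu_h\tau_b}$ in front of the human-infection delay term arises because it is paired with the $u_3$ equation; the weights $c_1,c_3$ are chosen so that the delayed incidence terms telescope. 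The diffusion terms give
\[
-d\int_\Omega\frac{u_i^*|\nabla u_i|^2}{u_i^2}\,\d x\le0
\]
after integrating by parts with the Neumann condition. For each nonlocal term, write $\int\Gamma\,u_3(t-\tau_a,y)\,\d y$ and split it using $\ln$ of the kernel average. Jensen's inequality, applied as $-g\!\left(\int\Gamma f\right)\ge-\int\Gamma g(f)$, lets the $g$-integrals be replaced by averages. After integrating over $x$, these cancel exactly against the delay-integral derivatives.

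\textbf{Step 3 (the main obstacle).} The non-delay part must be made $\le0$. The term $(A-u_1)$ is not a mass-action product, unlike the classical SIR structure. We plan to treat $A-u_1$ as the "susceptible mosquito" variable $S_m$ with $S_m'=-\mu_mS_m+\ldots$, since total mosquitoes equal $A$ at equilibrium scaling. The functional would then be rewritten with $g(S_m/S_m^*)$, or equivalently the $u_1$ term would be replaced by $\int_{u_1^*}^{u_1}(1-\frac{u_1^*(A-s)}{s(A-u_1^*)})\,\d s$-type weights. The goal is to arrive at
\[
V'\le-\int_\Omega\Big[\mu_h\frac{(u_2-u_2^*)^2}{u_2}+\sum(\text{terms of form } g(\cdot))\Big]\d x\le0 ,
\]
using the arithmetic–geometric mean inequality on cyclic products of ratios such as $\frac{u_1^*(A-u_1)u_3(\cdot-\tau_a)}{u_1(A-u_1^*)u_3^*}$. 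Getting $c_1,c_3$ and this rearrangement to close is the delicate algebra.

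\textbf{Step 4 (conclusion).} On $\mathcal{A}_0$, $V$ is bounded because of the positive lower bound, and it is nonincreasing along full orbits. By the LaSalle-type argument (cf. \cite{LiZhaoJDE2021}), the $\alpha$- and $\omega$-limit sets of any orbit in $\mathcal{A}_0$ lie in $\{V'=0\}$. This forces $u_2\equiv u_2^*$, and then $u\equiv u^*$, so $\mathcal{A}_0=\{u^*\}$. Since every admissible orbit's $\omega$-limit set is contained in $\mathcal{A}_0$, we obtain $u(t,\cdot,\phi)\to u^*$ uniformly on $\overline\Omega$.
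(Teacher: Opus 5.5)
Your architecture matches the paper's: a Volterra functional integrated over $\Omega$, delay integrals (which coincide with the paper's $\int_\Omega W_i\,\mathrm{d}x$ because $\int_\Omega\Gamma(t,x,y)\,\mathrm{d}x=1$), Neumann integration by parts, and a LaSalle argument on the persistent attractor. But there is a genuine gap exactly where the proof lives: Step 3 never establishes $V'\le 0$.

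You leave the factor $A-u_1$ as unresolved ``delicate algebra'' and offer two fixes that you call equivalent. They are not equivalent, and neither is needed. Your integral weight is just $\frac{A}{A-u_1^*}\,u_1^*g(u_1/u_1^*)$, since $1-\frac{u_1^*(A-s)}{s(A-u_1^*)}=\frac{A}{A-u_1^*}\bigl(1-\frac{u_1^*}{s}\bigr)$. The missing idea is simply to split $A-u_1=(A-u_1^*)-(u_1-u_1^*)$ after multiplying the $u_1$-equation by $\frac{\beta_hu_2^*}{\mu_m}\bigl(1-\frac{u_1^*}{u_1}\bigr)$. The second piece gives
\[
-\frac{\beta_m\beta_hu_2^*}{\mu_m u_1}(u_1-u_1^*)^2\int_\Omega\Gamma(d_m\tau_a,x,y)\,u_3(t-\tau_a,y)\,\mathrm{d}y\le 0 .
\]
The first piece is mass-action with the constant factor $A-u_1^*$, so $\beta_m(A-u_1^*)u_3^*=\mu_m u_1^*$ turns it into the usual cyclic terms.

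Second, the weights you actually wrote down do not close. You put weight $1$ on $u_2^*g(u_2/u_2^*)$. The term $-\beta_hu_1u_2$ coming from the $u_2$-equation can only be cancelled by the current-time part of the derivative of the $\tau_b$-delay integral. So that integral must carry weight $\kappa:=\beta_hu_1^*u_2^*$, not $\kappa e^{-\mu_h\tau_b}$. The weights then follow:
\begin{itemize}
\item telescoping the delayed incidence in the $u_3$-equation forces $c_3=e^{\mu_h\tau_b}$;
\item cancelling $\beta_hu_1u_2^*$ forces $c_1=\beta_hu_2^*/\mu_m$;
\item consequently the $\tau_a$-weight is also $\kappa$.
\end{itemize}
These are exactly the paper's choices.

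With these weights no Jensen inequality is needed: after integrating in $x$, the kernel terms telescope exactly. Discarding the nonpositive gradient terms, $-\mu_h(u_2-u_2^*)^2/u_2$, and the square term above, one obtains
\[
V'(t)\le-\kappa\int_\Omega\Bigl[g\Bigl(\frac{u_2^*}{u_2}\Bigr)+\int_\Omega\Gamma(d_m\tau_a,x,y)\,g\Bigl(\frac{u_1^*u_3(t-\tau_a,y)}{u_1u_3^*}\Bigr)\mathrm{d}y+\int_\Omega\Gamma(d_h\tau_b,x,y)\,g\Bigl(\frac{u_3^*(u_1u_2)(t-\tau_b,y)}{u_1^*u_2^*u_3}\Bigr)\mathrm{d}y\Bigr]\mathrm{d}x .
\]
This is the inequality your Step 4 presupposes. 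Once you have it, your LaSalle argument on $\mathcal{A}_0$ goes through: the square term gives $u_1\equiv u_1^*$, and the kernel $g$-terms then give $u_3\equiv u_3^*$.
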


\begin{proof}
Let $g(\omega)=\omega-1-\ln \omega$ for $\omega>0$, it is easy to check $g(1)=0$ is the global minimum value of $g(\omega)$ for $\omega>0$.
Define the following Lyapunov functional:
\[
V(t) = \int_\Omega \left(\sum_{i=1}^3 L_i(t,x) + \sum_{i=1}^2 W_i(t,x)\right) \d x
\]
with
\[
L_1(t,x) = \frac{\beta_h u_1^* u_2^*}{\mu_m} g \left(\frac{u_1}{u_1^*}\right),\ \
L_2{(t,x)} = u_2^* g \left(\frac{u_2}{u_2^*}\right),\ \ L_3{(t,x)} = \mathrm{e}^{\mu_h \tau_b} u_3^* g \left(\frac{u_3}{u_3^*}\right),
\]
\[
W_1(t,x) = \beta_h u_1^* u_2^* \int_{-\tau_a}^0 \int_\Omega \Gamma(d_m(-\theta),x,y) g \left(\frac{u_{3,\theta}({y})}{u_3^*}\right)\d y\d \theta
\]
and
\[
W_2(t,x) = \beta_h u_1^* u_2^* \int_{-\tau_b}^0 \int_\Omega \Gamma(d_h(-\theta),x,y) g \left(\frac{(u_1\diamond u_2)(t+\theta,{y})}{u_1^*u_2^*}\right)\d y\d \theta.
\]
where $u_{i,\theta}(y): = u_i(t+\theta,y)$, $u \diamond v (\xi,\zeta): = u(\xi,\zeta)v(\xi,\zeta)$.
The differential of $L_1(t,x)$ is calculating as follows
\begin{align*}
\frac{\partial L_1(t,x)}{\partial t} = &\ \frac{\beta_h u_2^*}{\mu_m} \left(1-\frac{u_1^*}{u_1}\right)\left(d_m \Delta u_1 + \beta_m(A-u_1)\int_\Omega \Gamma(d_m\tau_a,x,y)u_{3,-\tau_a}(y)\d y - \mu_m u_1\right)\\
=&\ \frac{\beta_h u_2^*}{\mu_m} \left(1-\frac{u_1^*}{u_1}\right)\left(d_m \Delta u_1 - \mu_m u_1\right)\\
&\ + \frac{\beta_h u_2^*}{\mu_m} \left(1-\frac{u_1^*}{u_1}\right)\beta_m(A-u_1^*)\int_\Omega \Gamma(d_m\tau_a,x,y)u_{3,-\tau_a}(y)\d y\\
&\ + \frac{\beta_h u_2^*}{\mu_m} \left(1-\frac{u_1^*}{u_1}\right)\beta_m(u_1^*-u_1)\int_\Omega \Gamma(d_m\tau_a,x,y)u_{3,-\tau_a}(y)\d y.
\end{align*}
Note that $\beta_m(A-u_1^*)u_3^* = \mu_m u_1^*$, we yield
\begin{align*}
\frac{\partial L_1(t,x)}{\partial t} = &\ \frac{d_m\beta_h u_2^*}{\mu_m} \left(1-\frac{u_1^*}{u_1}\right) \Delta u_1 - \beta_hu_1^*u_2^*\frac{u_1}{u_1^*} + \beta_hu_1^*u_2^* - \frac{\beta_m \beta_h u_2^*}{\mu_m u_1} (u_1-u_1^*)^2{\int_\Omega \Gamma(d_m\tau_a,x,y)u_{3,-\tau_a}(y)\d y}\\
&\ + \beta_hu_1^*u_2^* \int_\Omega \Gamma(d_m\tau_a,x,y)\frac{u_{3,-\tau_a}(y)}{u_3^*}\d y {-} \beta_hu_1^*u_2^* \int_\Omega \Gamma(d_m\tau_a,x,y)\frac{u_{3,-\tau_a}(y)u_1^*}{u_3^*u_1}\d y.
\end{align*}
Using the relationship of epidemic equilibrium $H=\beta_h u_1^* u_2^* + \mu_h u_2^*$ and $\beta_h e^{-\mu_h \tau_b} u_1^*u_2^* = \rho_h u_3^*$, we obtain the derivatives of $L_2(t,x)$ and $L_3(t,x)$ as follows
\begin{align*}
\frac{\partial L_2(t,x)}{\partial t} = &\ \left(1-\frac{u_2^*}{u_2}\right) \left(d_h \Delta u_2 + H - \beta_h u_1u_2 - \mu_h u_2\right)\\
= &\ \left(1-\frac{u_2^*}{u_2}\right) \left(d_h \Delta u_2 + \beta_h u_1^*u_2^* + \mu_h u_2^* - \beta_h u_1u_2 - \mu_h u_2\right)\\
= &\ \left(1-\frac{u_2^*}{u_2}\right) d_h \Delta u_2 - \frac{\mu_h}{u_2}(u_2-u^*)^2 + \beta_hu_1^*u_2^* - \beta_hu_1u_2 - \beta_hu_1^*u_2^* \frac{u_2^*}{u_2} + \beta_hu_1u_2^*,
\end{align*}
and
\begin{align*}
\frac{\partial L_3(t,x)}{\partial t} = &\ \mathrm{e}^{\mu_h \tau_b}\left(1-\frac{u_3^*}{u_3}\right) \left(d_h \Delta u_3 + \beta_h \mathrm{e}^{-\mu_h \tau_b} \int_\Omega \Gamma(d_h\tau_b,x,y)(u_1\diamond u_2)(t-\tau_b,y)\d y - \rho_h u_3\right)\\
= &\ \mathrm{e}^{\mu_h \tau_b}\left(1-\frac{u_3^*}{u_3}\right) d_h \Delta u_3 + \beta_hu_1^*u_2^* - \beta_hu_1^*u_2^* \frac{u_3}{u_3^*}\\
&\ + \beta_hu_1^*u_2^* \int_\Omega \Gamma(d_h\tau_b,x,y)\left(1-\frac{u_3^*}{u_3}\right)\frac{(u_1\diamond u_2)(t-\tau_b,y)}{u_1^*u_2^*}\d y.
\end{align*}
Since system (\ref{Model}) admits Neumann boundary condition, we have $\int_\Omega \Delta u_i\d x = 0$ and $\int_\Omega \frac{\Delta u_i}{u_i}\d x = \int_\Omega \frac{|\nabla u_i|^2}{u_i^2}\d x$. Consequently,
\begin{align*}
\frac{\partial }{\partial t} \int_\Omega \sum_{i=1}^3 L_i(t,x) \d x = &\ -\frac{d_m\beta_h {u_1^*}u_2^*}{\mu_m} \int_\Omega \frac{|\nabla u_1|^2}{u_1^2}\d x
- d_h {u_2^*} \int_\Omega \frac{|\nabla u_2|^2}{u_2^2}\d x - \mathrm{e}^{\mu_h \tau_b}d_h {u_3^*} \int_\Omega \frac{|\nabla u_3|^2}{u_3^2}\d x
+ \int_\Omega \Xi(t,x) \d x.
\end{align*}
where
\begin{align*}
\Xi(t,x) = &\  - \frac{\beta_m \beta_h u_2^*}{\mu_m u_1} (u_1-u_1^*)^2{\int_\Omega \Gamma(d_m\tau_a,x,y)u_{3,-\tau_a}(y)\d y} - \frac{\mu_h}{u_2}(u_2-u_2^*)^2 - \beta_hu_1^*u_2^*\frac{u_1}{u_1^*} + \beta_hu_1^*u_2^* \\
&\ + \beta_hu_1^*u_2^* \int_\Omega \Gamma(d_m\tau_a,x,y)\frac{u_{3,-\tau_a}(y)}{u_3^*}\d y + \beta_hu_1^*u_2^* \int_\Omega \Gamma(d_m\tau_a,x,y)\frac{u_{3,-\tau_a}(y)u_1^*}{u_3^*u_1}\d y\\
&\ + \beta_hu_1^*u_2^* - \beta_hu_1u_2 - \beta_hu_1^*u_2^* \frac{u_2^*}{u_2} + \beta_hu_1u_2^* + \beta_hu_1^*u_2^* - \beta_hu_1^*u_2^* \frac{u_3}{u_3^*}\\
&\ + \beta_hu_1^*u_2^* \int_\Omega \Gamma(d_h\tau_b,x,y)\left(1-\frac{u_3^*}{u_3}\right)\frac{(u_1\diamond u_2)(t-\tau_b,y)}{u_1^*u_2^*}\d y.
\end{align*}
Next, we deal with $W_1$ and $W_2$, one can obtain that
\begin{align*}
\frac{\partial W_1(t,x)}{\partial t} = &\ \beta_h u_1^* u_2^* \frac{\partial}{\partial t} \int_{-\tau_a}^0 \int_\Omega \Gamma(d_m(-\theta),x,y) g \left(\frac{u_{3,\theta}(y)}{u_3^*}\right)\d y\d \theta\\
=&\ \beta_h u_1^* u_2^* \frac{\partial}{\partial \theta} \int_{-\tau_a}^0 \int_\Omega \Gamma(d_m(-\theta),x,y) g \left(\frac{u_{3,\theta}(y)}{u_3^*}\right)\d y\d \theta\\
=&\ \beta_h u_1^* u_2^* g \left(\frac{u_3}{u_3^*}\right) - \beta_h u_1^* u_2^* \int_\Omega \Gamma(d_m\tau_a,x,y) g \left(\frac{u_{3,-\tau_a}(y)}{u_3^*}\right)\d y.
\end{align*}
Similarly,
\begin{align*}
\frac{\partial W_2(t,x)}{\partial t} = \beta_h u_1^* u_2^* g \left(\frac{u_1u_2}{u_1^*u_2^*}\right) - \beta_h u_1^* u_2^* \int_\Omega \Gamma(d_h\tau_b,x,y) g \left(\frac{(u_1\diamond u_2)(t-\tau_b,y)}{u_1^*u_2^*}\right)\d y.
\end{align*}
Thus,
\begin{align*}
\frac{\d V(t)}{\d t} = &\ -\frac{d_m\beta_h u_2^*}{\mu_m} \int_\Omega \frac{|\nabla u_1|^2}{u_1^2}\d x
- d_h \int_\Omega \frac{|\nabla u_2|^2}{u_2^2}\d x - \mathrm{e}^{\mu_h \tau_b}d_h \int_\Omega \frac{|\nabla u_3|^2}{u_3^2}\d x
+ \int_\Omega \Xi(x) \d x\\
&\ + \int_\Omega \beta_h u_1^* u_2^* g \left(\frac{u_3}{u_3^*}\right) \d x - \int_\Omega\beta_h u_1^* u_2^* \int_\Omega \Gamma(d_m\tau_a,x,y) g \left(\frac{u_{3,-\tau_a}(y)}{u_3^*}\right)\d y \d x\\
&\ + \int_\Omega\beta_h u_1^* u_2^* g \left(\frac{u_1u_2}{u_1^*u_2^*}\right)\d x - \int_\Omega\beta_h u_1^* u_2^* \int_\Omega \Gamma(d_h\tau_b,x,y) g \left(\frac{(u_1\diamond u_2)(t-\tau_b,y)}{u_1^*u_2^*}\right)\d y\d x\\
=&\ -\frac{d_m\beta_h u_2^*}{\mu_m} \int_\Omega \frac{|\nabla u_1|^2}{u_1^2}\d x
- d_h \int_\Omega \frac{|\nabla u_2|^2}{u_2^2}\d x - \mathrm{e}^{\mu_h \tau_b}d_h \int_\Omega \frac{|\nabla u_3|^2}{u_3^2}\d x
- \int_\Omega\beta_h u_1^* u_2^* g\left(\frac{u_2^*}{u_2}\right)\d x\\
&\ - \beta_h u_1^* u_2^* \int_\Omega \int_\Omega \Gamma(d_h\tau_b,x,y) g \left(\frac{(u_1\diamond u_2)(t-\tau_b,y)u_3^*}{u_1^*u_2^*u_3}\right)\d y\d x\\
&\ - \beta_h u_1^* u_2^* \int_\Omega \int_\Omega \Gamma(d_m\tau_a,x,y) g \left(\frac{u_1^*u_{3,-\tau_a}(y)}{u_1u_3^*}\right)\d y\d x,
\end{align*}
which lead to
\begin{align}\label{Equ1}
\frac{\d V(t)}{\d t} \leq &\ - \int_\Omega\beta_h u_1^* u_2^* g\left(\frac{u_2^*}{u_2}\right)\d x- \beta_h u_1^* u_2^* \int_\Omega \int_\Omega \Gamma(d_m\tau_a,x,y) g \left(\frac{u_1^*u_{3,-\tau_a}(y)}{u_1u_3^*}\right)\d y\d x\nonumber\\
&\ - \beta_h u_1^* u_2^* \int_\Omega \int_\Omega \Gamma(d_h\tau_b,x,y) g \left(\frac{(u_1\diamond u_2)(t-\tau_b,y)u_3^*}{u_1^*u_2^*u_3}\right)\d y\d x.
\end{align}
Hence, the map $t\mapsto V(u_t(\phi))$ is non-increasing.
{By some similar arguments in \cite{LiZhaoJDE2021,ZhangWangJMB2022},
we have $\lim_{t\rightarrow\infty} u(t,x,\phi) = u^*$.}
This completes the proof.

\end{proof}


\end{document}